\renewcommand{\PrintDOI}[1]{\doi{#1}}
\theoremstyle{plain}
\newtheorem{thm}{Theorem}[section]
\newtheorem{claim}[thm]{Claim}
\newtheorem{prop}[thm]{Proposition}
\newtheorem*{prop*}{Proposition}
\newtheorem*{seged*}{Sublemma}
\newtheorem{lem}[thm]{Lemma}
\newtheorem*{cond*}{Condition}
\newtheorem*{lem*}{Lemma}
\theoremstyle{definition}
\newtheorem*{defn*}{Definition}
\newtheorem{fel*}[thm]{Exercise}
\newtheorem*{megf*}{Observation}
\theoremstyle{remark}
\newtheorem{rem}[thm]{Remark}
\newtheorem{obs}[thm]{Observation}
\newtheorem*{rem*}{Remark}
\title{On the growth rate of dichromatic numbers of finite subdigraphs}
\author{Attila Joó \thanks{University of Hamburg and
Alfréd Rényi Institute of Mathematics.
Funding was provided by the Alexander von Humboldt Foundation and partially by OTKA 129211.
 Email: {\tt attila.joo@uni-hamburg.de
 } }}
\date{2019}
\begin{document}
\maketitle
\begin{abstract}
Chris Lambie-Hanson proved recently that for every function $ f:\mathbb{N}\rightarrow \mathbb{N} $ there is an $ \aleph_1 
$-chromatic graph $ G $ of size $ 2^{\aleph_1} $ such that every $ (n+3) $-chromatic subgraph of $ G $ has at least $ f(n) $ 
vertices. Previously, this fact was just known to be consistently true due to P. Komjáth and S. Shelah. We 
investigate the  
analogue  
of this question for directed graphs. In the first part of the paper we give a simple method to construct for an arbitrary  $ 
f:\mathbb{N}\rightarrow 
\mathbb{N} $  an uncountably dichromatic digraph $ D $ of size $ 2^{\aleph_0} $  such that every $ (n+2) 
$-dichromatic subgraph of  $ D $ has at least $ f(n) $ vertices. In the second part we show that it is consistent with arbitrary 
large continuum that in the previous theorem ``uncountably dichromatic'' 
and ``of size $ 2^{\aleph_0} $'' 
can be  replaced  by ``$\kappa $-dichromatic'' and  ``of size $ \kappa $''  respectively where $ \kappa $ is universally 
quantified with bounds $ \aleph_0 \leq \kappa 
\leq 2^{\aleph_0}$. 
\end{abstract}
\section{Introduction}

The investigation of the finite subgraphs of uncountably chromatic graphs was initiated by Erdős and Hajnal  in the 1970s.  
First they were trying to construct uncountably chromatic graphs avoiding short cycles. After finding out that 
it is 
impossible they showed in \cite{Erdhaj} that every uncountably chromatic graph must contain every finite bipartite graph as a 
subgraph and exactly those are the ``obligatory'' finite subgraphs. An old conjecture of Erdős and Hajnal  has been recently 
justified by Chris Lambie-Hanson:

\begin{thm}[C. Lambie-Hanson \cite{Chris}]\label{Chris}
For every function $ f:\mathbb{N}\rightarrow \mathbb{N} $ there is an $ \aleph_1 
$-chromatic graph $ G $ of size $ 2^{\aleph_1} $ such that every $ (n+3) $-chromatic subgraph of $ G $ has at least $ f(n) $ 
vertices. 
\end{thm}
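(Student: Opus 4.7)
The plan is to reduce to a construction on $\omega_1$ and then inflate to size $2^{\aleph_1}$ by a twin-vertex blow-up. Concretely, I would first build an $\aleph_1$-chromatic graph $G_0$ on $\omega_1$ whose finite subgraphs of chromatic number $\geq n+3$ all have at least $f(n)$ vertices, and then define $G$ on $\omega_1\times 2^{\omega_1}$ by replacing each $\alpha\in\omega_1$ with an independent set $I_\alpha$ of size $2^{\aleph_1}$ and placing a complete bipartite graph between $I_\alpha$ and $I_\beta$ whenever $\{\alpha,\beta\}\in E(G_0)$. Since twin-blowup by independent sets preserves chromatic number, every finite induced subgraph $H$ of $G$ has the same chromatic number as the induced subgraph of $G_0$ on the set $F$ of its first coordinates, and $|V(H)|\geq|F|$; hence the $f$-bound transfers from $G_0$ to $G$, and $\chi(G)=\chi(G_0)=\aleph_1$.

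For the core construction of $G_0$, my plan is to exploit Todorcevic's walks on $\omega_1$. Fix a $C$-sequence $\langle C_\alpha:\alpha<\omega_1\rangle$ with $\operatorname{otp}(C_\alpha)=\omega$ at limits and $C_{\alpha+1}=\{\alpha\}$, and form the standard trace functions $\rho_0,\rho_1,\rho_2:[\omega_1]^2\to\omega$. These yield a canonical $\omega$-valued coloring $c$ of $[\omega_1]^2$ that (i) attains every value on every uncountable square $[A]^2$, and (ii) restricts with tree-like, bounded-complexity behaviour to any finite walk-closed subset. Declare $E(G_0):=c^{-1}(S)$ for some carefully chosen $S\subseteq\omega$. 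Property (i) forces $\chi(G_0)\geq\aleph_1$, since any countable decomposition of $\omega_1$ into $G_0$-independent sets would have to avoid the colors of $S$ on some uncountable square, contradicting (i). Property (ii), together with a thinning of the $C$-sequence chosen to depend on $f$, should give the complementary bound: finite subgraphs of $G_0$ on fewer than $f(n)$ vertices are properly colorable with $n+2$ colors.

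The main obstacle is precisely step (ii): producing a coloring that is rich enough on the uncountable scale to force $\chi(G_0)=\aleph_1$ but tame enough on the finite scale to keep small subgraphs low-chromatic in the $f$-quantified way. Any coloring rich enough to force $\chi=\aleph_1$ naturally creates short odd cycles and high-chromatic subconfigurations, and suppressing these while preserving the global richness is the delicate combinatorial heart of the theorem. My plan to handle it is a layered construction, stratifying the edges of $G_0$ by walk depth into $G_0=\bigcup_{n}G_0^{(n)}$ and tuning the parameters so that chromatic complexity accumulates only gradually with vertex count, in lockstep with $f$. Once this local control is secured, the uncountable chromaticity of $G_0$ and the final blow-up to $2^{\aleph_1}$ vertices are both routine, and the theorem follows.
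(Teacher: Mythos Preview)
This theorem is not proved in the present paper; it is quoted as a background result of Lambie-Hanson and cited to \cite{Chris}. So there is no ``paper's own proof'' to compare against here.

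That said, your proposal has a structural gap. Your blow-up reduction would show that it suffices to build in ZFC an $\aleph_1$-chromatic graph $G_0$ on $\omega_1$ with the prescribed growth control on finite chromatic numbers. But that is strictly \emph{stronger} than the theorem you are trying to prove: the paper explicitly notes that Komj\'ath and Shelah only obtained such a $G_0$ on $\omega_1$ by forcing, while Lambie-Hanson's ZFC construction genuinely lives on $2^{\aleph_1}$ vertices. If your $G_0$ existed in ZFC, the blow-up would be superfluous and you would have improved the theorem; the fact that the size $2^{\aleph_1}$ appears is a sign that the argument does not factor through an $\omega_1$-sized object.

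Your own write-up concedes that ``the main obstacle is precisely step~(ii)'' and then offers only a sketch (``layered construction \dots tuning the parameters'') with no concrete mechanism for why a walk-based graph $c^{-1}(S)$ would have small finite subgraphs of controlled chromatic number. Walk colorings are designed to be complicated on uncountable sets, and nothing in the standard theory of $\rho_0,\rho_1,\rho_2$ gives the kind of quantitative finite-scale tameness you need; indeed, the usual shift/oscillation graphs built from walks contain small odd cycles. So as it stands the proposal identifies the hard part correctly but does not supply the missing idea.
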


 P. Komjáth and S. Shelah 
showed earlier the consistency of the statement (guaranteeing 
an $ \aleph_1 $-sized $ G $) in 
\cite{KopeShelah}. They also proved that consistently, for every graph $ G $ of chromatic number at least $ \aleph_2 $ one 
can find 
graphs of arbitrary large chromatic number whose finite subgraphs are already induced subgraphs of  $ G $. If we replace $ 
\aleph_2 $ by $ \aleph_1 $ the resulting statement (known as Taylor conjecture) becomes consistently false.

\medspace
As a directed analogue of the chromatic number V. Neumann-Lara defined  the dichromatic number of a 
digraph $ D $ in \cite{dich} as the smallest cardinal $ \kappa $ such that $ V(D) $ can be partitioned into $ 
\kappa $ many sets each of spanning an acyclic subdigraph of $ D $. He and  Erdős 
conjectured that having  
chromatic number greater than  $ 
f(k) $ implies to have  orientation with dichromatic number  greater than $ k $  for a suitable $ f:\mathbb{N}\rightarrow 
\mathbb{N} $. This old conjecture is still wide open, even the  existence of $ f(3) $ is 
 unknown. Several results about dichromatic number are similar 
with the corresponding theorems about chromatic numbers. For example it was shown first by Bokal et al. in 
\cite{konstruktiv} 
using 
probabilistic methods and later by Severino in \cite{prob} in a constructive way that there are digraphs with arbitrary large 
finite 
dichromatic number avoiding directed cycles up to a given length.  Considering the infinite analogue of the question, D. T. 
Soukup showed that  in contrast to the
the behaviour of uncountably chromatic graphs, it is consistent that a digraph is uncountably dichromatic but avoids directed 
cycles up to  a prescribed length (see Theorem 3.5 of \cite{konz true}). Later it was shown that this is already true in ZFC 
(see 
\cite{sajat}), more 
precisely,  for 
every $ n \in \mathbb{N} $ and infinite cardinal $ \kappa $ there is a $ 
\kappa $-dichromatic digraph avoiding directed cycles of length up to $ n $. Even so, Soukup's forcing construction 
has additional strong properties and it has the flexibility to handle stronger statements. We will develop it further to prove 
Theorem \ref{I cons res}. Another result in \cite{konz true} says that the statement ``every graph of 
size and chromatic number $ \aleph_1 $ 
has an $ \aleph_1 
$-dichromatic orientation'' is 
independent of ZFC. It suggests that maybe the infinite version of the Erdős-Neumann Lara conjecture, where we consider 
arbitrary cardinals instead of natural numbers, is more approachable than the original.

Observe that 
avoiding short 
directed 
cycles can be 
formulated as a lower bound on the size of $ 2 $-dichromatic subgraphs. It seems natural to have such a bound for the $ n 
$-chromatic subgraphs for each $ n\in \mathbb{N} $ simultaneously.  The first result of the paper is the following directed 
analogue of Theorem~\ref{Chris}:
\begin{thm}\label{I ZFC result}
For every $ f: \mathbb{N} \rightarrow \mathbb{N} $, there is an uncountably dichromatic digraph  $ D $ of size $ 
2^{\aleph_0} $ such 
that   
every $ (n+2)$-dichromatic subdigraph of $ D $ has at least $ f(n) $ vertices.
\end{thm}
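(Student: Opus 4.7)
The strategy I would adopt reduces the theorem to a forbidden-subdigraph avoidance problem and then elaborates the author's ZFC construction from \cite{sajat}. Without loss of generality assume $f$ is non-decreasing and $f(n) \geq n+2$. For each $n \in \mathbb{N}$, let $\mathcal{F}_n$ denote the finite set of isomorphism types of finite digraphs $H$ with dichromatic number exactly $n+2$ and $|V(H)| < f(n)$, and set $\mathcal{F} := \bigcup_{n \in \mathbb{N}} \mathcal{F}_n$. Then $\mathcal{F}$ is a countable family of finite digraphs, each of dichromatic number at least $2$ and hence each containing a directed cycle. The conclusion of Theorem~\ref{I ZFC result} is equivalent to the existence of an uncountably dichromatic digraph $D$ of size $2^{\aleph_0}$ into which no $F \in \mathcal{F}$ embeds as a subdigraph: indeed, if $H \subseteq D$ had dichromatic number $n+2$ and $|V(H)| < f(n)$, its isomorphism type would lie in $\mathcal{F}_n \subseteq \mathcal{F}$ and embed into $D$, contradicting the avoidance.

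I would construct such a $D$ by extending the method of \cite{sajat}, which produces uncountably dichromatic digraphs avoiding the finite family of short directed cycles. Fix a vertex set of cardinality $2^{\aleph_0}$ and an enumeration $\mathcal{F} = \{F_i : i < \omega\}$. I plan to run a staged construction with two parallel requirements at each stage: first, preserve the combinatorial mechanism of \cite{sajat} that forces dichromatic number $\aleph_1$ in the limit; second, decline to insert any edge whose addition would complete a copy of some $F_i$ enumerated thus far. At the end of the construction the resulting digraph has the prescribed vertex cardinality, is $\aleph_1$-dichromatic by the first requirement, and avoids $\mathcal{F}$ by the second.

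The main obstacle is proving that these two requirements are mutually compatible. In \cite{sajat} only finitely many patterns must be avoided, and the density of ``safe'' edges at each stage comfortably supports the construction. In our setting countably many patterns are forbidden, but each is a bounded-size digraph, so its avoidance imposes only a local constraint on finitely many edges per stage. The dichromatic-forcing mechanism of \cite{sajat}, by contrast, concerns arbitrarily large substructures of the approximation. I expect a standard dovetailing argument---handling an initial segment of $\mathcal{F}$ at each stage while maintaining enough free edges to run the \cite{sajat} scheme---to succeed, though verifying the requisite density bounds is the main technical work.
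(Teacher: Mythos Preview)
Your reformulation in terms of forbidden subdigraphs is correct, but the proposal is not a proof: the entire argument hinges on the unproved claim that a dovetailing scheme can simultaneously run the dichromatic-forcing mechanism of \cite{sajat} and avoid the countable family $\mathcal{F}$, and you explicitly defer ``verifying the requisite density bounds'' as ``the main technical work.'' That is precisely where the content lies, and there is real reason to doubt it can be filled in as described. The family $\mathcal{F}$ is far richer than a finite list of short cycles: for each $n$ it contains \emph{every} digraph of dichromatic number $n+2$ on fewer than $f(n)$ vertices, and the construction in \cite{sajat} is not an edge-by-edge staged process with free ``density'' to spare but (like the present paper) a fully determined shift-type digraph. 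There is no pool of safe edges to dovetail with, and a local rule ``decline any edge completing some $F_i$'' gives no handle on why the result should still be uncountably dichromatic.

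By contrast, the paper's proof is a short explicit construction with no staging at all. One takes $V=\bigtimes_{n<\omega}[0,g(n)-1]$ for a suitable non-decreasing $g$, and lets $uv\in D$ iff at the first coordinate $m$ where $u,v$ differ one has $v(m)\equiv u(m)+1\pmod{g(m)}$. Uncountable dichromatic number follows from a direct diagonal argument against any $\omega$-colouring, and the bound $f_D\ge f$ is witnessed by the colouring $u\mapsto(\mathsf{sgn}(u(0)),\dots,\mathsf{sgn}(u(n-1)))$ on any $U$ with $|U|<g(n)$. The point is that the growth of $g$ simultaneously controls the digirth at every level of the tree of initial segments, which is exactly the global structural fact your local edge-avoidance scheme cannot supply.
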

Under the continuum hypothesis the digraphs in Theorem \ref{I ZFC result} have ``optimal'' size (i.e. 
equal to their 
dichromatic number)  and settle the problem for $ \boldsymbol{\mathfrak{c}}:=2^{\aleph_0} $.   Our second result tells that 
it is 
consistent with arbitrarily large continuum that the analogue statement holds for every infinite $ \kappa\leq \mathfrak{c} $ 
with 
optimal sized 
digraphs. More precisely:
\begin{thm}\label{I cons res}
There is a ccc forcing $ \mathbb{P} $ of size $ \mathfrak{c} $ such that $ V^{\mathbb{P}}\models $ for every infinite 
cardinal  $ \kappa\leq \mathfrak{c} $ and for every $ f: 
\mathbb{N} \rightarrow \mathbb{N} $ there is a $ \kappa $-dichromatic digraph $ D $ on $ \kappa $ such that every $ (n+2 )
$-dichromatic subdigraph of $ D $ has at least $ f(n) $ vertices.
\end{thm}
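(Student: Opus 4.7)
The plan is to define $\mathbb{P}$ as a finite-support ccc iteration (or ccc finite-support product) of length $\mathfrak{c}$ that diagonalizes over all pairs $(\kappa,f)$ with $\aleph_{0}\leq \kappa\leq \mathfrak{c}$ and $f:\mathbb{N}\to\mathbb{N}$. Via a standard bookkeeping (using that in a ccc iteration every name for a cardinal $\leq\mathfrak{c}$ and every name for a function $\mathbb{N}\to\mathbb{N}$ is represented by a bounded collection of ground-model data), we arrange that each pair is targeted cofinally, and at each stage we iterate an atomic forcing $\mathbb{Q}_{\kappa,f}$ whose generic is a digraph $D_{\kappa,f}$ on $\kappa$ witnessing the theorem for that pair.

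The atomic forcing $\mathbb{Q}_{\kappa,f}$ is obtained by strengthening the Soukup-style ccc forcing from \cite{konz true} that already underlies Theorem~\ref{I ZFC result}. A condition is a pair $p=(D_{p},\Pi_{p})$ where $D_{p}$ is a finite digraph with $V(D_{p})\subseteq\kappa$, and $\Pi_{p}$ assigns, to every $n\in\mathbb{N}$ and every $A\subseteq V(D_{p})$ with $|A|<f(n)$, a partition $\Pi_{p}(n,A)$ of $A$ into $n+1$ parts that are acyclic in $D_{p}\!\upharpoonright\!A$. The extension relation demands containment of the digraph part together with compatible extensions of the recorded partitions. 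The subdigraph bound is then immediate from the definition: any finite subdigraph of the generic $D_{\kappa,f}$ lies in some $D_{p}$, and $\Pi_{p}$ explicitly exhibits an $(n+1)$-acyclic colouring whenever its vertex set has size less than $f(n)$.

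To verify that $\mathbb{Q}_{\kappa,f}$ is ccc, I would apply the $\Delta$-system lemma to an uncountable family of conditions: after shrinking, the vertex sets form a $\Delta$-system with common root $r$, all digraphs are isomorphic over $r$, and all partition data agree on $r$; two such conditions then amalgamate by the obvious push-out. To verify that $D_{\kappa,f}$ has dichromatic number $\geq\kappa$, I would run the usual density argument: given a $\mathbb{Q}_{\kappa,f}$-name $\dot c$ for an acyclic partition of $\kappa$ into $\lambda<\kappa$ parts, pigeonhole produces a large monochromatic set over any condition, and one extends by adding a short monochromatic directed cycle. The crucial point is that the partition-promise $\Pi_{p}$ on a set of size $<f(n)$ forbids $n+2$ acyclic parts but \emph{permits} $n+1$ — this is exactly why the gap of two in the theorem statement is the right budget: inserting a directed cycle of length $n+2$ into a single colour class can always be done without overriding a recorded $\Pi_{p}(n,A)$, whereas it would not be for the stronger $(n+1)$-dichromatic statement.

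The main obstacle is precisely this tension between the density arguments forcing high dichromatic number and the partition constraints bounding the dichromatic number of small subsets; I expect the verification to require a careful choice of which extensions are admitted, so that monochromatic cycles can always be added but only outside the ``frozen'' structure recorded by $\Pi_{p}$. A secondary obstacle is propagating these properties through the iteration: ccc iterations preserve cardinals, and finite support guarantees that any finite subdigraph of a previously generic $D_{\kappa,f}$ is already decided at a bounded stage, so the subdigraph bound is absolute; but one must still check by a reflection-to-a-ccc-subforcing argument that the density lemma for $D_{\kappa,f}$ survives the later iterands, and this is where the modularity of Soukup's framework from \cite{konz true} does the real work.
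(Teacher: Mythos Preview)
Your overall architecture---a finite-support ccc iteration of length $\mathfrak{c}$ with bookkeeping over pairs $(\kappa,f)$---matches the paper. The differences, and the genuine gaps, lie in the atomic forcing and its analysis.

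The paper's atomic forcing $\mathbb{P}_{\kappa,f}$ is simpler than yours: a condition is just a finite digraph $p$ with $V(p)\in[\kappa]^{<\aleph_0}$ satisfying $f_p\geq f$, ordered by inclusion. Your side data $\Pi_p$ is redundant, since $f_p\geq f$ already asserts the existence of such colourings; carrying a specific witness only creates an extra obligation at extension time (a recorded partition of $A$ may cease to be acyclic once new edges are added inside $A$) that the paper avoids entirely.

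Two of your key steps are not right as stated. First, the ccc amalgamation is not an ``obvious push-out'': after the $\Delta$-system and isomorphism reduction one still has to verify $f_{p_\beta\cup p_\delta}\geq f$, and the paper does this via an \emph{acyclic semihomomorphism} $\varphi:p_\beta\cup p_\delta\to p_\delta$ (collapse along the isomorphism; each fibre is edgeless, so $\chi$ cannot increase). Second, and more seriously, the density argument cannot add a ``short'' cycle, and certainly not one of length $n+2$: a cycle of length $<f(0)$ would itself witness $f_q\not\geq f$. The paper instead takes, after another $\Delta$-system reduction, $m$ pairwise isomorphic conditions $q_0,\dots,q_{m-1}$ of common size $n$ with
\[
m \;=\; n + \max_{k\leq n}\bigl(f(k+1)-f(k)\bigr),
\]
adds a directed cycle $C$ through one designated vertex of each, and checks $f_q\geq f$ via a semihomomorphism $\varphi:q\to q_0$ whose only non-acyclic fibre is $C$. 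For any $W\subseteq V(q)$ this yields $\chi(q[W])\leq \chi(q_0[\varphi[W]])+1$, and the surplus $|W|-|\varphi[W]|\geq m-n$ is precisely what absorbs the jump $f(k^*+1)-f(k^*)$. This quantitative balance between cycle length and the increments of $f$ is the heart of the argument and is absent from your sketch; your remark about the ``gap of two'' and frozen $\Pi_p$-data does not address it.

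(Minor point: Theorem~\ref{I ZFC result} in the paper is a direct ZFC construction on a product of finite intervals, not a forcing; the Soukup forcing from \cite{konz true} is the starting point for the present consistency result, not for that one.)
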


Note that being ccc and having size $\mathfrak{c} $ ensures the preservation of all cardinals and it keeps the continuum the 
same thus  the forcing really accomplishes what we promised.

The paper is organized 
as follows. In the next section we introduce the necessary notation. The third section is subdivided into 
two parts in which we prove Theorems \ref{I ZFC result} and \ref{I cons res} respectively.

\section{Notation}
For an ordered pair $ \left\langle u,v \right\rangle  $ we write simply $ \boldsymbol{uv} $. The 
range of a function $ f $ is denoted by $ \boldsymbol{\mathsf{ran}(f)} $. The concatenation of sequences $ s $ and $ z $ 
is  $\boldsymbol{s ^{\frown}z} $ where sequences of length one are not distinguished in notation from their only 
elements. The Cartesian product of the sets $ X_i $ is $ \boldsymbol{\bigtimes_i X_i} $. The variable $ 
\boldsymbol{\kappa} $ 
is 
used for infinite cardinals, $ \boldsymbol{\alpha}, \boldsymbol{\beta}, \boldsymbol{\delta}  $ for 
ordinals and $ \boldsymbol{\omega} $ stands for the set of natural numbers. The set  subsets of $ X $ of size $ \kappa $ is 
denoted 
by $\boldsymbol{ [X]^{\kappa}} $ while $ \boldsymbol{ [X]^{<\kappa}} $ stands for the subsets smaller than $ \kappa $. 
About forcing we use the standard terminology and notation except that the ground model and the generic extension by 
generic filter $ \boldsymbol{G} $ are denoted by $ \boldsymbol{M} $ and $ \boldsymbol{M[G]} $ respectively instead of 
the more 
common $ V $ (which we preserve for vertex sets). 

A 
digraph  $ \boldsymbol{D} $ is a set 
of 
ordered  pairs 
without loops 
(i.e., without elements of the form $ vv $). 
A directed cycle $ C $ of size $ n \ (2\leq n<\omega) $ is a digraph of the form $ \{v_0v_1, v_1v_2,\dots, v_{n-1}v_0  \} $ 
where the $ v_i $ are pairwise distinct and  $ n\geq 2 $. The \textbf{digirth} of a  
$ D $ is 
the 
size of 
its smallest directed cycle if there are any, otherwise $ \infty $. A colouring of  the vertex set  $ \boldsymbol{V(D)}$ of $ D 
$  is \textbf{chromatic} (with respect to $ 
D $) if there is no monochromatic directed cycle. The \textbf{dichromatic number} $ \boldsymbol{\chi(D)} $ of $ D $ is the 
smallest 
cardinal $ \kappa $ such that $ D $ admits a chromatic colouring  with $ \kappa $ many colours. For $ U\subseteq V(D) $,  $ 
\boldsymbol{D[U]} $ denotes the subdigraph induced by $ U $ in $ D $.  Let $ 
\boldsymbol{f_D}(n):=\min \{ \left|U\right|: U\subseteq V(D) \wedge\chi(D[U])=n \} $ 
where $ \min 
\varnothing $ is considered $ \infty $. Note that if $ \chi(D)\geq \aleph_0 $ then by standard compactness arguments $ D $ has a finite $ n 
$-dichromatic subdigraph for every $ n<\omega $ and hence $ f_D $ has only finite values.

\section{Main results}
\subsection{The growing rate of  $\mathbf{f_D} $ for uncountably dichromatic $\mathbf{D} $ }
\begin{thm}\label{ZFC result}
For every $ f: \omega \rightarrow \omega $, there is an uncountably dichromatic digraph  $ D $ of size $ 2^{\aleph_0} $ such 
that   
every $ (n+2)$-dichromatic subdigraph of $ D $ has at least $ f(n) $ vertices.
\end{thm}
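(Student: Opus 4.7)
The plan is to build $D$ by gluing together finite ``critical'' dichromatic building blocks inside a carrier of size $\mathfrak{c}$, mirroring the strategy of Lambie-Hanson's construction for Theorem~\ref{Chris} but taking advantage of the fact that a dichromatic colouring is spoiled by a single monochromatic directed cycle, which is why $n+3$ improves to $n+2$ in the directed version.

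First, for each $n\in\omega$ I would produce a finite digraph $H_n$ with $\chi(H_n)=n+2$ such that every $(n+2)$-dichromatic subdigraph of $H_n$ has at least $f(n)$ vertices (i.e.\ $f_{H_n}(n+2)\ge f(n)$). Starting from a long directed cycle for $n=0$ and inductively applying a directed analogue of the Mycielski construction or the blow-ups used in \cite{konstruktiv,prob}, one should be able to raise the dichromatic number by one at each step while simultaneously forcing every newly $(n+3)$-dichromatic subdigraph to be large. Since this is a purely finitary statement I would isolate it as a self-contained combinatorial lemma.

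Next, I would take $V=\omega^\omega$ (so $|V|=\mathfrak{c}$) and, for every $n$ and every $\sigma\in\omega^n$, place a vertex-disjoint copy of $H_n$ whose vertices lie in the basic clopen cylinder $N_\sigma=\{f\in V:\sigma\subset f\}$; the edge set of $D$ is the union of the edges of all these placed copies. Disjointness will be arranged so that any finite $U\subseteq V$ of size less than $f(n)$ cannot contain a full copy of any $H_k$ with $k\ge n$, since $|V(H_k)|\ge f(k)\ge f(n)$ (replace $f$ by its running maximum if not monotone). Consequently $D[U]$ is covered by pairwise vertex-disjoint copies of $H_k$'s with $k<n$, and the maximum of their dichromatic numbers is at most $n+1$, giving the desired subdigraph bound.

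The main obstacle is then showing $\chi(D)\ge\aleph_1$. For any countable colouring $c:V\to\omega$, the task is to locate some $n$ and some placed copy of $H_n$ on which $c$ uses fewer than $n+2$ colours, which would yield a monochromatic directed cycle and contradict chromaticity. My plan is to apply an infinitary Ramsey-type principle on $\omega^\omega$, for instance a Galvin--Prikry or Mathias style partition argument inside a single colour class, to extract for arbitrarily large $n$ a copy of $H_n$ whose vertices fall in a short list of colour classes. Aligning the placement in the previous step with this Ramsey extraction so that it succeeds uniformly in $n$ is the most delicate point: the gadgets must be spread out enough along $\omega^\omega$ for the partition argument to bite, yet concentrated enough that small subdigraphs still see only few copies.
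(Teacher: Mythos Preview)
Your plan has a structural flaw that cannot be repaired by any Ramsey argument. If the placed copies of the $H_n$ are pairwise vertex-disjoint, then $D$ is the vertex-disjoint union of finite digraphs, and for such a union one always has $\chi(D)=\sup_i \chi(D_i)\le\aleph_0$: simply colour each copy of $H_n$ properly with the colours $\{0,\dots,n+1\}\subseteq\omega$, reusing the same palette for every copy. No monochromatic cycle arises because every directed cycle lies entirely inside one copy. So the ``main obstacle'' you identify is not delicate but impossible: the countable chromatic colouring exists outright, and there is nothing for a Galvin--Prikry or Mathias argument to find. Conversely, if you allow the copies to overlap in order to escape this, then your bound on $\chi(D[U])$ collapses, since a set $U$ meeting several overlapping copies of $H_k$ with $k<n$ need not have dichromatic number bounded by $\max_k \chi(H_k)$.

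The paper avoids this trap by not using finite building blocks at all. It takes $V=\bigtimes_{n<\omega}[0,g(n)-1]$ and puts $uv\in D$ when, at the first coordinate $m$ where $u$ and $v$ differ, $v(m)=u(m)+1\pmod{g(m)}$. The edges connecting different cylinders are precisely what drives $\chi(D)>\aleph_0$: for any countable colouring one inductively finds a cylinder $V_{i_0\cdots i_{n-1}}$ avoiding colours $0,\dots,n-1$, and the limit sequence is an uncoloured vertex. The subdigraph bound comes from an explicit $2^n$-colouring $u\mapsto(\mathsf{sgn}(u(0)),\dots,\mathsf{sgn}(u(n-1)))$. Incidentally, the finite digraphs $H_n$ you wanted as input are obtained in the paper only \emph{a posteriori}, by compactness from the main theorem (Remark~\ref{omega OK}).
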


\begin{proof}
 To prove Theorem \ref{ZFC result}, it is enough to construct for every non-decreasing $ g:\omega \rightarrow \omega $  an 
 uncountably  
dichromatic digraph $ D $ of size $ 2^{\aleph_0} $
such that  whenever  $ H\subseteq D $ with $ \left|V(H)\right|<g(n) $ for some $  n <\omega $, we 
have $ 
\chi(H)\leq 2^{n} $.   Clearly, it is enough to consider only induced subdigraphs.

Let $ \boldsymbol{V}:=\bigtimes_{n<\omega} [0,g(n)-1] $ and for $ u\neq v\in V $, let $ uv\in \boldsymbol{D} $  if for the smallest $ m $ for 
which $ 
u(m)\neq v(m) $ we have $ v(m)=u(m)+1 $ mod $ g(m) $. For $ s\in \bigtimes_{k<n} [0,g(k)-1] $, let 
$ \boldsymbol{V_s}:=\{ v\in V: v \upharpoonright n=s\} $.
\begin{obs}\label{cycles are big}
For every $ s\in \bigtimes_{k<n} [0,g(k)-1] $,   $ D[V_s] $ has digirth $ g(n) $.
\end{obs}
\begin{lem}
 $ \chi(D)>\aleph_0 $
\end{lem}
\begin{proof}
Suppose for a contradiction that $ c: V(D) \rightarrow \omega $ is a chromatic colouring of $ D $. Colour~$ 0 $ cannot 
appear in all the sets $ V_i\ (i<g(0))  $ because otherwise by picking one 0-coloured vertex from each of those sets we would 
obtain a monochromatic 
directed cycle. We choose an $ i_0 $ such that colour $ 0 $ is not used in $ V_{i_0} $. Colour $ 1 $ cannot appear in all the 
sets $ V_{i_0^{\frown} i}\ (i<g(1))  $ because of similar reasons hence there is some $ i_1<g(1) $ such that colours $ 
0 $ and $ 
1 $ are not used in $ V_{i_0^{\frown} i_1} $.  By recursion we get a sequence $ v:=(i_n)_{n<\omega}\in V(D) $ such that 
none 
of the 
colours are used by $ c $ to colour vertex $ v $ which is a contradiction.
\end{proof}
\begin{lem}
$ f_D \geq f $
\end{lem}

\begin{proof}
Let  $ n<\omega $ be fixed and take a $ U\subseteq V $ of size less than $ g(n) $. We need to show  
$ {\chi(D[U])\leq 2^{n}}$. In the case $ n=0 $, Observation \ref{obs} says that $ D $ has digirth $ g(0) $, thus $ D[U] $ is 
acyclic.  For the case $ n >0 $, we define a chromatic colouring  $ c: U \rightarrow \{ 0,1 
\}^{n} $ by setting    $ {\boldsymbol{c(u)}:=(\mathsf{sgn}(u(0)),\mathsf{sgn}(u(1)),\dots, 
\mathsf{sgn}(u(n-1))) }$  for  $ u\in U $
(here $ \mathsf{sgn}(0)=0 $ and $ 
\mathsf{sgn}(n)=1 $ for $n>0 $).  
To prove that $ c $ is a chromatic colouring, suppose that  $ C $ is a directed cycle in $ D[U] $ and let $ s $ be the longest 
common 
initial segment of the vertices in $ 
 V(C) $. Since $ 
\left|V(C)\right|\leq \left|U\right|<g(n) $, Observation \ref{cycles are big} guarantees that  $n > \left|s\right|=: 
\boldsymbol{m}$. From the 
structure of $ D $ is 
clear that we must have $ V(C)\cap V_{{s}^{\frown} 
i}\neq \varnothing $ for every $ i<g(m) $. Then for  a $ u_0\in V(C)\cap V_{{s}^{\frown} 
0} $ and $ u_1\in V(C)\cap V_{{s}^{\frown} 
1} $, $ c(u_0)\neq c(u_1) $ (because $ c(u_0)(m)=0\neq 1= c(u_1)(m)$). Thus $ c $ is a chromatic colouring  and hence $ 
\chi(D[U])\leq 2^{n}$.
\end{proof}
\end{proof}

\begin{rem}\label{omega OK}
For every $ f: \omega\rightarrow \omega $ there is an $ \aleph_0 $-dichromatic digraph $ D $ on $ \omega $ such that $ f_D 
\geq f $. Indeed, it follows from Theorem \ref{ZFC result}  via compactness arguments that for a fixed $ f$  for every $ 
n<\omega $ there is a 
finite $(n+2) 
$-dichromatic digraph $ D_n $ for which $ f_{D_n}\geq f $. By taking disjoint copies of the  digraphs $ 
D_n $  we obtain a 
desired $ D $.
\end{rem}

\subsection{A consistency result about  $ \leq 2^{\aleph_0} $-dichromatic digraphs}

We restate the result here in a slightly stronger form. Let us remind that in light of Remark \ref{omega OK} we do not have 
to 
bother with the   case $ 
\kappa=\aleph_0 $.
\begin{thm}\label{cons res}
There is a ccc forcing $ \mathbb{P} $ of size $ \mathfrak{c} $ such that $ \Vdash_\mathbb{P} $ for every uncountable 
cardinal  $ \kappa\leq \mathfrak{c} $ and for every $ f: 
\omega \rightarrow \omega $, there is a  digraph $ D $ on $ \kappa $ with $ f_D \geq f $ such that for every  uncountable $ 
U\subseteq V(D) $ we have $ \chi(D[U])=\left|U\right| $, in particular $ \chi(D)=\kappa $. 
\end{thm}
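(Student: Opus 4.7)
The plan is to prove Theorem \ref{cons res} by defining $\mathbb{P}$ as a finite-support product of ccc forcings $\mathbb{Q}_{\kappa,f}$ indexed by pairs $(\kappa,f)$ with $\kappa$ an \emph{uncountable} cardinal $\leq \mathfrak{c}$ and $f:\omega\to\omega$ (by Remark \ref{omega OK} the case $\kappa=\aleph_0$ requires no forcing). Each factor will add a digraph $D_{\kappa,f}$ on $\kappa$ of the required kind; since the finite-support product of $\mathfrak{c}$-many ccc forcings, each of size $\leq\mathfrak{c}$, is itself ccc and of size $\mathfrak{c}$, the resulting $\mathbb{P}$ preserves cardinals and the continuum.

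The conditions of $\mathbb{Q}_{\kappa,f}$ are finite digraphs $p$ on finite subsets of $\kappa$ satisfying $f_p\geq f$, ordered by reverse inclusion. (I expect the actual proof has to enrich each condition by attaching a finite family of chromatic colourings witnessing $f_p\geq f$ on the relevant subdigraphs, in the spirit of Soukup's construction in \cite{konz true}, so that amalgamations can be controlled.) The generic object $D_{\kappa,f}=\bigcup G$ is a digraph on $\kappa$, and since every finite induced subdigraph is captured by a single condition, $f_{D_{\kappa,f}}\geq f$ transfers automatically to the extension.

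The heart of the argument is the density lemma: if $\dot U$ is a $\mathbb{P}$-name forced to be a subset of $\kappa$ of some fixed uncountable cardinality $\lambda$, and $\dot c$ is a name for a function $\dot U\to\check\mu$ with $\mu<\lambda$, then the set of conditions forcing a monochromatic directed cycle in $D_{\kappa,f}[\dot U]$ is dense. Given a condition $p_0$, one selects for each $\alpha<\mu^+\leq\lambda$ an extension $p_\alpha\leq p_0$ and a vertex $v_\alpha\in\kappa$ such that $p_\alpha\Vdash \check v_\alpha\in\dot U\wedge\dot c(\check v_\alpha)=\check i_\alpha$; by pigeonhole one thins down to a sub-family of size $\mu^+$ on which $i_\alpha$ equals a fixed value $i$; by the $\Delta$-system lemma one obtains a further sub-family whose supports pairwise intersect in a common finite root $R$; by another pigeonhole argument one assumes the pieces are pairwise isomorphic over $R$; and finally one picks $m\geq f(0)$ of them and amalgamates by glueing along $R$ and adjoining the single directed cycle $v_{\alpha_1}v_{\alpha_2}\cdots v_{\alpha_m}v_{\alpha_1}$. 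The amalgamated condition $q$ then forces the desired monochromatic cycle of length $m$ in $D_{\kappa,f}[\dot U]$.

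The main obstacle is verifying that $q$ still belongs to $\mathbb{Q}_{\kappa,f}$, i.e.\ that the property $f_q\geq f$ is preserved by this amalgamation. Any $(n+2)$-dichromatic subdigraph $H\subseteq q$ either uses few edges of the new cycle $C$ — in which case $H$ essentially splits along the pairwise isomorphic pieces $p_{\alpha_j}$, and a coherent combination (via the isomorphism over $R$) of the witness chromatic colourings of the $p_{\alpha_j}$'s bounds $\chi(H)$ — or $H$ uses many edges of $C$, which forces $H$ to meet many distinct pieces and hence drives $|V(H)|$ large. Formalizing this combinatorial trade-off, very likely by careful bookkeeping of auxiliary chromatic colourings attached to the conditions, is the technical core of the proof. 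Once the density lemma and the ccc of each $\mathbb{Q}_{\kappa,f}$ (which follows from the same amalgamation applied to any uncountable antichain) are in place, Theorem \ref{cons res} follows by standard product-forcing considerations.
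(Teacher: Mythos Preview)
Your overall architecture is close to the paper's, but there is one genuine gap and several places where the paper's argument is both simpler and sharper than what you sketch.

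\textbf{The real gap: product versus iteration.} You build $\mathbb{P}$ as a finite-support \emph{product} indexed by ground-model pairs $(\kappa,f)$. But the statement quantifies over all $f:\omega\to\omega$ \emph{in the extension}, and your product (being non-trivial ccc of length $\mathfrak{c}$) adds new reals, hence new functions $f$ not dominated by any ground-model $g$. For such an $f$ there is no reason any of your digraphs $D_{\kappa,g}$ satisfies $f_{D_{\kappa,g}}\geq f$; you only know $f_{D_{\kappa,g}}\geq g$. The paper avoids this by using a finite-support \emph{iteration} of length $\mathfrak{c}$ with bookkeeping, so that every $f$ appearing in the final model already lives in some intermediate model $M[G_{<\beta}]$ and is handled by a later factor $\dot{\mathbb{Q}}_\beta=\dot{\mathbb{P}}_{\check\kappa,\dot f}$. (Incidentally, your assertion that finite-support products of ccc forcings are ccc is false in general; it happens to be fine here because the $\Delta$-system argument actually yields the Knaster property, but you should say so.)

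\textbf{Unnecessary complications.} Two of your design choices are not needed. First, conditions do \emph{not} have to carry witness colourings; the bare requirement $f_p\geq f$ on the finite digraph $p$ suffices. Second, you aim to show directly that any colouring $\dot c:\dot U\to\check\mu$ with $\mu<|\dot U|$ has a monochromatic cycle. The paper reduces this to the single statement ``every uncountable $U$ spans a directed cycle in $D$'': if that holds and $c:U\to\mu$ with $\mu<|U|$, then some colour class is uncountable and hence contains a cycle. This makes the density argument work uniformly with $\aleph_1$-combinatorics regardless of $|U|$.

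\textbf{The amalgamation.} Your heuristic for $f_q\geq f$ (``few versus many edges of the new cycle'') is on the right track but your choice $m\geq f(0)$ is not enough. The paper takes $m=n+\max_{k\leq n}\bigl(f(k+1)-f(k)\bigr)$, where $n$ is the common size of the isomorphic conditions, and verifies $f_q\geq f$ via a single semihomomorphism $\varphi:q\to q_0$ collapsing all copies onto one. The point is that $\varphi$ is acyclic except over the one vertex $\alpha_0$, whose preimage is exactly the new cycle $C$; then for any $W\subseteq V(q)$ one gets $\chi(q[W])\leq\chi(q_0[\varphi[W]])+1$ by Proposition~\ref{acyc hom}, and the extra $m-n$ vertices in $W$ absorb the jump $f(k^*+1)-f(k^*)$. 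No bookkeeping of auxiliary colourings is required.
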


\begin{proof}
Without loss of generality we can assume that $ f $ is non-decreasing.  We start with some 
basic estimations that we need later. If $ D, H $ are digraphs then a 
function $ 
\varphi: V(D)\rightarrow V(H) $ 
is a  \textbf{semihomomorphism} from $D $ 
to $ H $ if for each $ 
uv\in D $
either $ \varphi(u)\varphi(v)\in H $ or $ \varphi(u)=\varphi(v) $. A semihomomorphism is \textbf{acyclic} if for every $ v\in 
\mathsf{ran}(\varphi) $, $ \varphi^{-1}(v) $ 
spans an acyclic  
subdigraph of $ D $ (shortly $ \chi(D[ \varphi^{-1}(v)])=1 $).

\begin{prop}\label{acyc hom0}
If $ D, H $ are digraphs and $ \varphi $ is an acyclic semihomomorphism from $D $ to $ H $, then $ \chi(D)\leq \chi(H) $. 
\end{prop}
\begin{proof}
If $ c$ is a chromatic colouring of $ H $ then so is $   c\circ \varphi $ for $ D $.
\end{proof}

\begin{prop}\label{acyc hom}
Let $ D, H $ be digraphs and assume that there is a   semihomomorphism $ \varphi $ from $D $ to $ H $ where  $ 
\chi(D[\varphi^{-1}(v)])=k_v+1 
$ for $ v\in \mathsf{ran}(\varphi)$. Then $ \chi(D)\leq \chi(H)+\sum_{v\in \mathsf{ran}(\varphi)}k_v
$. 
\end{prop}
\begin{proof}
For $ v\in \mathsf{ran}(\varphi)$, fix a  chromatic colouring $ c_v $ of $ D[\varphi^{-1}(v)] $ with the colours 
$ \{0,\dots,k_v  \}  $.  For every $ u\in V(D) $ with  $ c_{\varphi(u)}(u)\neq 0 $,  colour $ u $ with  the ordered pair $ 
\left\langle 
\varphi(u),c_{\varphi(u)}(u)
\right\rangle  $. Delete all the $ u $ we already coloured. Since 
from  each $ D[\varphi^{-1}(v)] $ all but the vertices $ u $ with $ c_v(u) = 0 $ have been deleted, the restriction of $ \varphi 
$ to 
the remaining digraph is an acyclic semihomomorphism. By Proposition \ref{acyc hom}, it has a chromatic 
colouring with the colours, say $ \{ 0,\dots,\chi(H)-1 \} $. We defined a chromatic colouring of $ D $ with $ 
\chi(H)+\sum_{v\in \mathsf{ran}(\varphi)}k_v $ colours witnessing the desired inequality.
\end{proof}

To continue the proof of Theorem \ref{cons res},  let $ \kappa $ be an uncountable cardinal  and  let $ f:\omega\rightarrow 
\omega 
$. We define $ 
\pmb{\mathbb{P}_{\kappa,f}} $ to be  the poset where $ p\in \mathbb{P}_{\kappa,f} $ if $ p $ is a digraph with $ 
V(p)\in 
[\kappa]^{<\aleph_0} 
$  satisfying $ f_p \geq f $ and  $ q\leq p $ if $ p\subseteq q$. 

\begin{lem}\label{ccc}
$ \mathbb{P}_{\kappa,f} $   satisfies ccc. 
\end{lem}
\begin{proof}
Let $ \{ p_\alpha: \alpha<\omega_1 \}\subseteq \mathbb{P}_{\kappa,f} $. By the $ \Delta $-system lemma, there is an 
uncountable $ \boldsymbol{U}\subseteq \omega_1 $ such that $ \{ V(p_\alpha): \alpha \in U \} $ forms a $ \Delta $-system 
with root $ \boldsymbol{R} $.  We consider for $ \alpha\in U $  the 
following first order 
structure $ 
\boldsymbol{\mathcal{A}_\alpha} $  on ground set $ V(q_\alpha) $: we have a binary relation defined by the digraph  $ 
p_\alpha $, a linear order $ \in $ given by the fact that the
elements of $ V(q_\alpha) $ are ordinals and constants for each element of $ R $. Up to isomorphism there 
are 
just 
finitely 
many such a first order structures on a finite ground set  therefore there is an uncountable $ \boldsymbol{U'}\subseteq 
U $ such that for $ \alpha\in U' $, the $ 
\mathcal{A}_\alpha $ are pairwise isomorphic. Note that the isomorphism between two $ \mathcal{A}_\alpha $ is uniquely 
determined by the linear order, 
and its restriction to $ R $ is the identity. 

We show that for  $ \beta\neq\delta\in U' $, $ p_\beta\cup p_\delta\in  
\mathbb{P}_{\kappa,f} $. Let $ V(p_\beta)=\{ \beta_0,\dots, \beta_{n-1} \} $ and $ V(p_\delta)=\{ \delta_0,\dots, 
\delta_{n-1} \} $ where the enumerations are in increasing order. Consider $ \boldsymbol{\varphi}:  p_\beta\cup p_\delta
\rightarrow p_\delta$ where $
\varphi(\beta_i):=\delta_i $  and $ \varphi(\delta_i):=\delta_i $. Since $ \mathcal{A}_\beta$ and $ \mathcal{A}_\delta $ are 
isomorphic, $ \varphi $ is a semihomomorphism.
 The inverse image of a vertex $ \delta_i $ is $ \{ 
\beta_i, \delta_i \} $ which is a singleton if  $ \delta_i \in R $ and a vertex pair without any edge between them 
 otherwise.  Hence  $ \varphi$ is an acyclic semihomomorphism. Let  $ 
 \boldsymbol{W}\subseteq V(p_\beta\cup p_\delta) $ be 
 arbitrary and $ 
 \boldsymbol{W^{*}}:=\varphi[W] $. We write $ \boldsymbol{k} $ and $\boldsymbol{ k^{*}} $ for $ \chi( (p_\beta\cup 
 p_\delta)[W]) $ and $ 
 \chi( (p_\beta\cup p_\delta)[W^{*}]) $ 
 respectively. Then $ \left|W^{*}\right|\geq f(k^{*}) $ because $ p_\delta\in \mathbb{P}_{\kappa,f} $.   Proposition 
 \ref{acyc hom0} guarantees $ k^{*}\geq k $ from which $ f(k^{*})\geq f(k) $ follows since $ f $ is assumed to be 
 non-decreasing. By combining these facts, we obtain
 
 \[  \left|W\right|\geq \left|W^{*}\right|\geq f(k^{*})\geq f(k).\]
 
Since $ W $ was arbitrary, we may conclude that $ f_{p_\beta\cup p_\delta}\geq f $ and hence  $ p_\beta\cup p_\delta\in 
\mathbb{P}_{\kappa,f}  $.
\end{proof}

Suppose that $ G $ is a $ \mathbb{P}_{\kappa,f} $-generic 
filter and let us define $ \boldsymbol{D}:=\bigcup G $.

\begin{obs}\label{obs}
 $ D $  is a digraph on $ \kappa $ satisfying  $ f_D \geq f $.
\end{obs}

\begin{lem}\label{Dani cucc}
$ \chi(D[U])=\left|U\right| $ holds for every  uncountable $ 
U\subseteq V(D) $. 
\end{lem}
\begin{proof}

It is enough to show that for every $ U\in [\kappa]^{\aleph_1} $ the digraph $ D[U] $ contains some directed cycle. Let  
$U\subseteq V(D) $ be uncountable forced by the condition $ \boldsymbol{p}\in G $. It is enough to show that $ 
\boldsymbol{S}:=\{ 
q\in 
\mathbb{P}_{\kappa,f}: q\Vdash  \chi(\dot{D}[\dot{U}])\geq 2\} $ is dense below  $ p $ (note that $ \chi(D[U])\geq 2 $ 
means that $ U $ spans some 
directed cycle in $ D $). Let $ \boldsymbol{r}\leq p $ be given. For every $ \alpha\in U $, we pick an $ 
\boldsymbol{q_\alpha }\leq r $ 
such that  $ q_\alpha \Vdash  \check{\alpha}\in \dot{U} $ and  $\alpha\in  
V(q_\alpha) $. We proceed similarly as in the proof of Lemma \ref{ccc}. By applying the $ \Delta $-system lemma, we trim 
$ U $ to an 
uncountable 
$\boldsymbol{ U'}\subseteq U $  where $ \{ V(p_\alpha): \alpha \in U \} $ form a $ \Delta $-system with root $ 
\boldsymbol{R} $. For $ \alpha\in U' $,  let $ \boldsymbol{\mathcal{B}_\alpha} $ be the first order structure  on ground set 
$ V(q_\alpha) $ where we have a constant that stands for $ \alpha $ in $ \mathcal{B}_\alpha $, a binary 
relation defined by the digraph  $ 
q_\alpha $, a linear order $ \in $ given by the fact that the
elements of $ V(q_\alpha) $ are ordinals and one constant for each element of $ R $. Up to isomorphism there are just 
finitely 
many such a first order structures on a finite ground set therefore there is an uncountable $ 
\boldsymbol{U''}\subseteq 
U' $ such that for $ \alpha\in U'' $, the $ 
\mathcal{B}_\alpha $ are pairwise isomorphic. Let $ \boldsymbol{n} $ be the common size of the ground sets of the 
structures $ 
\{ \mathcal{B}_\alpha: \alpha\in U'' \} $. Note that $  U''\cap R =\varnothing $ otherwise  we would have $ 
U''\subseteq R $ contradicting the fact that $ R $ is finite. We pick  $ \boldsymbol{m}:=n+\max_{k\leq n}f(k+1)-f(k) $ many
elements, say $ 
\boldsymbol{\alpha_0},\dots,\boldsymbol{\alpha_{m-1}}$, of $ U'' $ and define the directed cycle $ \boldsymbol{C}:=\{ 
\alpha_0\alpha_1, 
\alpha_1\alpha_2,\dots 
,\alpha_{m-1}\alpha_0 \} $. To simplify the notation, from now on we write simply $ \boldsymbol{q_i} $ instead of $ 
q_{\alpha_i} $ and let us 
define  
$\boldsymbol{q}:=C\cup\bigcup_{i<m}q_{i}  $.
\begin{claim}
 $  q\in  \mathbb{P}_{\kappa,f} $.
\end{claim}
\begin{proof}
The only nontrivial part of the claim is that $ f_q\geq f $ holds. 
Let 
$ V(q_i)=\{ \beta_{i,j}: j<n \} $ where the enumeration is in increasing order. There is a 
$ j_0<n $ such that $ \alpha_i=\beta_{i,j_0} $ for every $ i<m $ because the $ \mathcal{B}_{\alpha_i} $ are pairwise 
isomorphic. Consider the function $ \varphi: V(q)\rightarrow V(q_{0}) $ where $  \varphi(\beta_{i,j})= \beta_{0,j}  
$. The inverse image  of a $ v\in V(q_{0}) $ with respect to  $ \varphi $ is: $ \{ v \} $ if $ v\in 
R $, the directed 
cycle $ C $ if $ v=\alpha_0 $ and an independent set of size $ m $ otherwise. Combining this with the fact that $ 
\mathcal{B}_{\alpha_i} $ are 
pairwise 
isomorphic, we may conclude that $ \varphi $ is a semihomomorphism from $ q $ to $ q_0 $. Let  $ 
\boldsymbol{W}\subseteq V(q) $ be 
arbitrary and $ 
\boldsymbol{W^{*}}:=\varphi[W] $. We write $ \boldsymbol{k} $ and $\boldsymbol{ k^{*}} $ for $ \chi(q[W]) $ and $ \chi(q[W^{*}]) $ 
respectively.   If
$ V(C)\not\subseteq W $, then   $ \varphi \upharpoonright W $  is an acyclic semihomomorphism from $ q[W] $ to
$ q_{0}[W^{*}] $ and hence $ k\leq k^{*} $. By combining this with the facts that $ f_{q_{0}}\geq f$ and $ f $ is 
non-decreasing,  we obtain
\[  \left|W\right|\geq \left|W^{*}\right|\geq f(k^{*})\geq f(k).\]
If $ V(C)\subseteq W $ then $ \left|W\right|\geq \left|V(C)\right|=m $ and hence $ \left|W \right|-\left|W^{*}\right|\geq m-n $. 
In 
this 
case  we  apply Proposition \ref{acyc hom} with $ \varphi \upharpoonright 
W $. Using the terminology of the Proposition, $ k_{v}=1 $ if $ v=\alpha_0 $ and $ k_{v}=0 $ otherwise
thus $ k\leq k^{*}+1 $. By the choice of $ m $ we obtain
\[  \left|W\right|\geq \left|W^{*}\right|+(m-n)\geq f(k^{*})+(m-n)\geq f(k^{*}+1)\geq f(k).\]
\end{proof}
 Clearly $ q\leq q_i $ for $ i<m $ and therefore  $ q \Vdash \bigwedge_{i<m}\check{\alpha}_i\in  \dot{U}$.  Because of $ 
 C\subseteq q $ 
we also have 
 $q\Vdash \chi(\dot{D}[\dot{U}] )\geq 2$. Since $ r\leq p $ was arbitrary and $ q\leq q_i\leq r $, we may conclude that $ S $ is dense below  $ p $.
\end{proof}
We build the $ \mathbb{P} $ in Theorem \ref{cons res} as the $ \mathbb{P}_{\mathfrak{c}} $ of a finite support iteration $ 
(\mathbb{P}_\alpha, \dot{\mathbb{Q}}_\beta)_{\alpha\leq \mathfrak{c}, \beta <\mathfrak{c}} $   of ccc posets of 
size at most $ 
\mathfrak{c} $ which ensures that $ \mathbb{P} $ is ccc and $ \left|\mathbb{P}\right|=
\mathfrak{c} $. We let every (non-trivial)  factor $ \dot{\mathbb{Q}}_\beta $ to  be 
$\dot{\mathbb{P}}_{\check{\kappa}, \dot{f}}  $ for some
 $ \aleph_0<\kappa\leq 2^{\aleph_0} $ and for a  nice  $ \mathbb{P}_\delta $-name $ \dot{f} $ of a function $ f:\omega 
\rightarrow \omega $ where $ \delta<\beta $.  Lemma \ref{ccc} ensures that the factors are really ccc. By standard 
bookkeeping techniques,  the iteration
can be 
organized in the 
way that for every $ \mathbb{P} $-generic filter $ G $, uncountable  $ \kappa\leq \mathfrak{c} $ and  $ f:\omega 
\rightarrow \omega $ living in $ M[G] $, there is a 
factor  $ \dot{\mathbb{Q}}_\beta =\dot{\mathbb{P}}_{\check{\kappa}, \dot{f}} $ in the iteration. Then by Observation 
\ref{obs}, the digraph $ D $ given by the intermediate forcing   $\mathbb{Q}_\beta=\mathbb{P}_{\kappa,f} $ has size $ 
\kappa $ and  satisfies $ f_D 
\geq f $. To justify $ M[G]\models \forall U\in 
 [\kappa]^{\aleph_1}: \chi(D[U])\geq 2  $,  consider the forcing $ \mathbb{P}_{\geq \beta} $  over the intermediate extension 
 $ M[G_{<\beta}] $). From this point the proof  goes the same way as the proof of
 Lemma \ref{Dani cucc} by  working formally with the whole 
 $ \mathbb{P}_{\geq \beta} $ instead of just $\mathbb{Q}_\beta=\mathbb{P}_{\kappa,f} $. More precisely, whenever we  
 deal with a condition $ p $ in the original proof, we consider now just its initial coordinate $ p(\beta) $.
 
\end{proof}

\begin{bibdiv}
\begin{biblist}

\bib{Erdhaj}{article}{
   author={Erd\H{o}s, P.},
   author={Hajnal, A.},
   title={On chromatic number of graphs and set-systems},
   journal={Acta Math. Acad. Sci. Hungar.},
   volume={17},
   date={1966},
   pages={61--99},
   issn={0001-5954},
   review={\MR{193025}},
   doi={10.1007/BF02020444},
}
		
\bib{Chris}{article}{
      title={On the growth rate of chromatic numbers of finite subgraphs},
        author={Lambie-Hanson, Chris},
        review={ \url{https://arxiv.org/abs/1902.08177}},
        year={2019}
        }
        
\bib{KopeShelah}{article}{
   author={Komj\'{a}th, P\'{e}ter},
   author={Shelah, Saharon},
   title={Finite subgraphs of uncountably chromatic graphs},
   journal={J. Graph Theory},
   volume={49},
   date={2005},
   number={1},
   pages={28--38},
   issn={0364-9024},
   review={\MR{2130468}},
   doi={10.1002/jgt.20060},
}

\bib{konz true}{article}{
   author={Soukup, D\'{a}niel T.},
   title={Orientations of graphs with uncountable chromatic number},
   journal={J. Graph Theory},
   volume={88},
   date={2018},
   number={4},
   pages={606--630},
   issn={0364-9024},
   review={\MR{3818601}},
   doi={10.1002/jgt.22233},
}
\bib{sajat}{article}{
      title={Uncountable dichromatic number without short directed cycles},
        author={Joó, Attila},
        review={\url{https://arxiv.org/abs/1905.00782}},
        year={2019}
        }

\bib{chrom survey}{article}{
   author={Komj\'{a}th, P\'{e}ter},
   title={The chromatic number of infinite graphs---a survey},
   journal={Discrete Math.},
   volume={311},
   date={2011},
   number={15},
   pages={1448--1450},
   issn={0012-365X},
   review={\MR{2800970}},
   doi={10.1016/j.disc.2010.11.004},
}

\bib{dich}{article}{
   author={Neumann Lara, V.},
   title={The dichromatic number of a digraph},
   journal={J. Combin. Theory Ser. B},
   volume={33},
   date={1982},
   number={3},
   pages={265--270},
   issn={0095-8956},
   review={\MR{693366}},
   doi={10.1016/0095-8956(82)90046-6},
}

\iffalse
\bib{extrem}{article}{
   author={Erd\H{o}s, Paul},
   author={Gimbel, John},
   author={Kratsch, Dieter},
   title={Some extremal results in cochromatic and dichromatic theory},
   journal={J. Graph Theory},
   volume={15},
   date={1991},
   number={6},
   pages={579--585},
   issn={0364-9024},
   review={\MR{1133813}},
   doi={10.1002/jgt.3190150604},
}
\fi
\bib{prob}{article}{
   author={Bokal, Drago},
   author={Fijav\v{z}, Ga\v{s}per},
   author={Juvan, Martin},
   author={Kayll, P. Mark},
   author={Mohar, Bojan},
   title={The circular chromatic number of a digraph},
   journal={J. Graph Theory},
   volume={46},
   date={2004},
   number={3},
   pages={227--240},
   issn={0364-9024},
   review={\MR{2063373}},
   doi={10.1002/jgt.20003},
}
\bib{konstruktiv}{article}{
   author={Severino, Michael},
   title={A short construction of highly chromatic digraphs without short
   cycles},
   journal={Contrib. Discrete Math.},
   volume={9},
   date={2014},
   number={2},
   pages={91--94},
   issn={1715-0868},
   review={\MR{3320450}},
}

\end{biblist}
\end{bibdiv}
\end{document}